\newcommand{\forceqed}{\begin{flushright}\vspace{-1.85em}\Square\end{flushright}}
\DeclareMathOperator{\DefEq}{\stackrel{{\rm def}}{=}}
\DeclareMathOperator{\DefEqDisp}{\hspace{0.15em}\DefEq\hspace{0.15em}}
\begin{document}

\title*{Extending Babbage's (Non-)Primality Tests}
\titlerunning{Extending Babbage's (Non-)Primality Tests}
\author{Jonathan Sondow}
\institute{209 West 97th Street, New York, NY 10025 \texttt{jsondow@alumni.princeton.edu}
}
\maketitle

\begin{abstract}\mbox{We recall Charles Babbage's $1819$ criterion for primality, based} on simultaneous congruences for binomial coefficients, and extend it to a least-prime-factor test. We also prove a partial converse of his non-primality test, based on a single congruence. Two problems are posed.
Along the way we encounter Bachet, Bernoulli, B\'{e}zout, Euler, Fermat, Kummer, Lagrange, Lucas, Vandermonde, Waring, Wilson, Wolstenholme, 
and several contemporary mathematicians.
\end{abstract}

\section{Introduction} \label{SEC:intro}

Charles Babbage was an English mathematician, philosopher, inventor, mechanical engineer, and ``irascible genius'' who pioneered computing machines \cite{babbagebio,beyer,grabiner,moseley,mactutor,odonnell}.
Although he held the Lucasian Chair of Mathematics at Cambridge University from $1828$ to $1839$, 
during that period he never resided in Cambridge or delivered a lecture \cite{blackwood}, \cite[p.~$7$]{dubbey}.\\

 \centerline{\includegraphics[width=.9in]{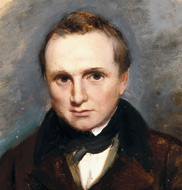}} \vspace{-.3 cm} \begin{center}{Charles Babbage (1791--1871)}\end{center}

In $1819$ he published his only work on number theory, a short paper \cite{babbage} that begins:

\begin{quotation}
The singular theorem of Wilson respecting Prime Numbers, which was first
published by Waring in his {\em Meditationes Analyticae} \cite[p.~218]{waring}, and to which neither himself
nor its author could supply the demonstration, excited the attention of the
most celebrated analysts of the continent, and to the labors of Lagrange \cite{lagrange} and Euler
we are indebted for several modes of proof $\dotso.$
\end{quotation}
Babbage formulated {\bf Wilson's theorem} as a criterion for primality:
{\em an integer $p>1$ is a prime if and only if $(p-1)! \equiv -1 \!\pmod{p}$}. (For a modern proof, see Moll \cite[p.~66]{moll}.) 
He  then introduced several such criteria, involving congruences for binomial coefficients (see Granville \cite[Sections~1 and 4]{granville}). However, some of his claims were unproven or even wrong
(as Dubbey points out in \cite[pp.~139--141]{dubbey}). One of his valid results is a necessary and sufficient condition for primality, based on a number of simultaneous congruences. 
Henceforth let $n$ denote an integer.

\begin{theorem}[Babbage's Primality Test] \label{THM:test}
An integer $p>1$ is a prime if and only if 
\begin{equation} \label{EQ:test}
\binom{p+n}{n}\equiv 1 \pmod{p} 
\end{equation}
for all $n$ satisfying $0 \le n \le p-1.$
\end{theorem}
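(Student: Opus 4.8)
The plan is to treat the two implications separately, pivoting on the elementary fact that $p \mid \binom{p}{k}$ for all $1 \le k \le p-1$ precisely when $p$ is prime. The forward direction is quick; the substance is in the converse, where the least prime factor of a composite $p$ will single out the congruence that breaks.

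For the forward direction, suppose $p$ is prime. I would expand by Vandermonde's convolution,
\[
\binom{p+n}{n} = \sum_{k=0}^{n}\binom{p}{k}\binom{n}{n-k},
\]
and observe that for $0 \le n \le p-1$ every index $k$ with $1 \le k \le n$ also satisfies $1 \le k \le p-1$, so $p \mid \binom{p}{k}$ and that term vanishes modulo $p$. Only the $k=0$ term $\binom{p}{0}\binom{n}{n}=1$ survives, giving \eqref{EQ:test}. (Equivalently, one could read this off Lucas' theorem, since the base-$p$ digits of $p+n$ and of $n$ are $(1,n)$ and $(0,n)$.)

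For the converse I would prove the contrapositive: if $p$ is composite, then \eqref{EQ:test} fails for some admissible $n$. Let $q$ be the least prime factor of $p$; since $p$ is composite we have $q < p$, so $n=q$ lies in the range $0 \le n \le p-1$. The key arithmetic input is the pair of claims that $\binom{p}{k}\equiv 0 \pmod p$ for $1 \le k \le q-1$, whereas $\binom{p}{q}\not\equiv 0 \pmod p$. The first follows from $k\binom{p}{k}=p\binom{p-1}{k-1}$ together with $\gcd(k,p)=1$, which holds because every prime factor of such a $k$ is smaller than $q$. Granting the second claim, Vandermonde again yields
\[
\binom{p+q}{q} = \sum_{k=0}^{q}\binom{p}{k}\binom{q}{q-k} \equiv 1 + \binom{p}{q} \pmod{p},
\]
and the non-vanishing of $\binom{p}{q}$ forces $\binom{p+q}{q}\not\equiv 1 \pmod p$, as desired.

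The main obstacle is exactly this second claim, $p \nmid \binom{p}{q}$, since $\binom{p}{q}$ can genuinely be divisible by $q$ when $q^2 \mid p$. I would factor $\binom{p}{q}=\tfrac{p}{q}\binom{p-1}{q-1}$ and reduce the claim to showing $q \nmid \binom{p-1}{q-1}$: writing $p=qm$, the divisibility $p \mid m\binom{p-1}{q-1}$ is equivalent to $q \mid \binom{p-1}{q-1}$. Finally, since $q \mid p$ gives $p-j \equiv -j \not\equiv 0 \pmod q$ for $1 \le j \le q-1$, the product $(p-1)(p-2)\cdots(p-q+1)=\binom{p-1}{q-1}(q-1)!$ is coprime to $q$, whence so is $\binom{p-1}{q-1}$. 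This disposes of the delicate case $q^2 \mid p$ as well, because the argument never assumes that $q$ exactly divides $p$.
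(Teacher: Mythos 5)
Your proof is correct, and its skeleton is the same as the paper's: establish $\binom{p}{k}\equiv 0\pmod{p}$ for $1\le k\le q-1$ (you via $k\binom{p}{k}=p\binom{p-1}{k-1}$ and $\gcd(k,p)=1$, the paper via B\'{e}zout applied to $k!$ and $m$ --- the same coprimality fact), feed this into Vandermonde's convolution to get \eqref{EQ:test} for $n\le q-1$, and then show the congruence breaks exactly at the least prime factor $n=q$. The one genuine divergence is in that last step. The paper factors $\binom{m+\ell}{\ell}=\frac{m+\ell}{\ell}\binom{m+\ell-1}{\ell-1}$ and reuses the already-proved congruence at $n=\ell-1$, which costs nothing extra and yields the exact residue $\frac{m}{\ell}+1$ (the second assertion of Theorem~\ref{THM:LPF}). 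You instead keep the $k=q$ term of Vandermonde, getting $\binom{p+q}{q}\equiv 1+\binom{p}{q}\pmod{p}$, and prove separately that $p\nmid\binom{p}{q}$ by reducing to $q\nmid\binom{p-1}{q-1}$ via $p-j\equiv -j\pmod{q}$; this correctly handles the delicate case $q^2\mid p$ and is a perfectly sound alternative, at the price of an extra lemma and a slightly less explicit residue ($1+\frac{p}{q}\binom{p-1}{q-1}$ rather than $1+\frac{p}{q}$). Your forward direction (Vandermonde, or Lucas) matches the paper's.
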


This is of only theoretical interest, the test being slower than trial division.

The ``only if'' part is an immediate consequence of the beautiful {\bf theorem of Lucas} \cite{lucas} (see \cite{fine,granville,mestrovic0,mestrovic2} and \cite[p.~70]{moll}), which
asserts that {\em if $p$ is a prime and the non-negative integers
  $a = \alpha_0 + \alpha_1 \, p + \dotsb + \alpha_r \, p^r$ and
  \mbox{$b = \beta_0 + \beta_1 \, p + \dotsb + \beta_r \, p^r$}
are written in base $p$ $($so that \mbox{$0 \le \alpha_i, \beta_i \le p-1$} for all~$i)$, then}
\begin{equation}  \label{EQ:Lucas}
  \binom{a}{b} \equiv
\prod_{i=0}^r  \binom{\alpha_i}{\beta_i} \pmod{p}.
\end{equation}
(Here the convention is that $\binom{\alpha}{\beta} =0$ if $\alpha < \beta$.) The congruence \eqref{EQ:test} follows if \mbox{$0 \le n \le p-1$}, for then all the binomial coefficients formed on the right-hand side of \eqref{EQ:Lucas} are of the form $\binom{\alpha}{\alpha} =1,$ except the last one, which is $\binom{1}{0} =1.$

However, the theorem was not available to Babbage, because when it was published in $1878$ he had been dead for seven years.

Lucas's theorem implies more generally that {\em for $p$ a prime and $m$ a power of $p,$ the congruences
\begin{equation} \label{EQ:test2}
\binom{m+n}{n}\equiv 1 \pmod{p} \qquad (0 \le n \le m-1)
\end{equation}
hold.} A converse was proven
in $2013$:
{\bf Me\v{s}trovi\'{c}'s theorem} \cite{mestrovic2} states that {\em if $m>1$ and $p>1$ are integers such that \eqref{EQ:test2} holds,
then $p$ is a prime and $m$ is a power of~$p.$}
To begin the proof, Me\v{s}trovi\'{c} noted that for $n = 1$ the hypothesis gives
\begin{equation*} \label{EQ:Mestpf}
\binom{m+1}{1} = m+1 \equiv 1 \pmod{p}\quad \implies\quad p\mid m.
\end{equation*}
The rest of the proof involves combinatorial congruences modulo prime powers.

As Me\v{s}trovi\'{c} pointed out,
\noindent 
``the `if' part of Theorem~\ref{THM:test} is an immediate consequence of [his theorem] (supposing a~priori [that $m = p$]). Accordingly, [his theorem] may be considered as a generalization of Babbage's criterion for primality.''

Here we offer another generalization of Babbage's primality test.

\begin{theorem}[Least-Prime-Factor Test] \label{THM:LPF}
The least prime factor of an integer $m>1$ is the smallest natural number $\ell$ satisfying
	\begin{align} \label{EQ:LPF2}
\binom{m+\ell}{\ell}  \not \equiv 1 \pmod{m}.
\end{align}
For that value of $\ell,$ the least non-negative residue of 
$\binom{m+\ell}{\ell}$ modulo $m$ is $\frac{m}{\ell} + 1.$
\end{theorem}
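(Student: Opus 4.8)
The plan is to set $\ell$ equal to the least prime factor of $m$ and establish the two assertions separately: first, that $\binom{m+n}{n}\equiv 1\pmod m$ for every $n$ with $1\le n\le \ell-1$, so that no smaller value can trigger the non-congruence; and second, that at $n=\ell$ the congruence fails, with least non-negative residue $\frac m\ell+1$. Both parts rest on writing the coefficient as a ratio of an explicit product, namely $\binom{m+n}{n}=\frac{(m+1)(m+2)\cdots(m+n)}{n!}$, and tracking the numerator modulo $m$. (The case $n=0$ need not be considered, since $\binom{m}{0}=1$.)

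For the first part I would observe that when $1\le n\le\ell-1$ every prime dividing $n!$ is at most $n<\ell$, hence is coprime to $m$; thus $n!$ is invertible modulo $m$. Clearing the denominator gives the integer identity $\binom{m+n}{n}\,n!=\prod_{k=1}^{n}(m+k)$, and reducing each factor modulo $m$ yields $\prod_{k=1}^n(m+k)\equiv \prod_{k=1}^n k=n!\pmod m$. Cancelling the invertible factor $n!$ leaves $\binom{m+n}{n}\equiv 1\pmod m$, as required. (For prime $m$ this is just the relevant case of Babbage's test, but the present argument needs no primality.)

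The crux is the value at $n=\ell$, and here the obstacle is precisely that the denominator $\ell!$ now contains the factor $\ell$, which divides $m$, so the cancellation trick above is no longer available. I would sidestep this with the absorption identity $\binom{m+\ell}{\ell}=\frac{m+\ell}{\ell}\binom{m+\ell-1}{\ell-1}$. Because $\ell\mid m$, the fraction $\frac{m+\ell}{\ell}=\frac m\ell+1$ is an integer, so this is an honest integer identity that isolates the single troublesome factor of $\ell$. The remaining coefficient $\binom{m+\ell-1}{\ell-1}=\binom{m+(\ell-1)}{\ell-1}$ falls under the first part (since $\ell-1<\ell$), giving $\binom{m+\ell-1}{\ell-1}\equiv 1\pmod m$. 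Multiplying, $\binom{m+\ell}{\ell}\equiv\frac m\ell+1\pmod m$.

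It then remains to confirm that this is a genuine failure of \eqref{EQ:LPF2} and that it yields the stated residue. Since $\ell\ge 2$ we have $1\le \frac m\ell\le \frac m2<m$, so $\frac m\ell\not\equiv 0\pmod m$ and hence $\frac m\ell+1\not\equiv 1\pmod m$; combined with the first part this shows $\ell$ is indeed the smallest natural number satisfying the non-congruence. Moreover $\frac m\ell+1$ lies in the interval $[0,m)$ whenever $\frac m\ell+1<m$ (automatic for $\ell\ge 3$, and for $\ell=2$ exactly when $m>2$), so it is the least non-negative residue; the single boundary case $m=2$ can be checked by hand.
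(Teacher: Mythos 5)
Your proof is correct, and its second half coincides exactly with the paper's, but the first half takes a genuinely different (and more economical) route. For the congruences $\binom{m+n}{n}\equiv 1\pmod m$ with $1\le n\le \ell-1$, the paper first proves $\binom{m}{k}\equiv 0\pmod m$ for $1\le k\le \ell-1$ (via B\'{e}zout's identity applied to the coprime pair $k!$ and $m$) and then feeds this into Vandermonde's convolution $\binom{m+n}{n}=\sum_{k=0}^n\binom{m}{k}\binom{n}{n-k}$, in which every term except $k=0$ vanishes modulo $m$. You instead reduce the rising product $(m+1)(m+2)\dotsb(m+n)$ modulo $m$ directly and cancel the invertible factor $n!$; this bypasses Vandermonde entirely while resting on the same underlying fact, namely that $n!$ is coprime to $m$ because every prime dividing it is smaller than $\ell$. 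The step at $n=\ell$ is identical in both arguments: the absorption identity $\binom{m+\ell}{\ell}=\frac{m+\ell}{\ell}\binom{m+\ell-1}{\ell-1}$, the integrality of $\frac{m}{\ell}+1$, and the case $n=\ell-1$ of the first part. One point where you are actually more careful than the paper: you check when $\frac{m}{\ell}+1$ really is the least non-negative residue. The boundary case you defer, $m=2$, is the one place where the theorem's residue claim literally fails, since $\binom{4}{2}=6\equiv 0\pmod 2$ while $\frac{m}{\ell}+1=2$; the paper proves only the congruence $\binom{m+\ell}{\ell}\equiv\frac{m}{\ell}+1\not\equiv 1\pmod m$ and never addresses this.
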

The proof is given in Section~\ref{SEC:proof}. 

Babbage's primality test is an easy corollary of the least-prime-factor test. Indeed, Theorem~\ref{THM:LPF} implies a sharp version of Theorem~\ref{THM:test} noticed by Granville \cite{granville} in $1995.$

\begin{corollary}[Sharp Babbage Primality Test] \label{THM:short}
Theorem~\ref{THM:test} remains true if the range for $n$ is shortened to $0 \le n\le \sqrt{p}.$
\end{corollary}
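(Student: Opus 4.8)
The plan is to read the corollary off directly from the Least-Prime-Factor Test (Theorem~\ref{THM:LPF}), using only the elementary fact that a composite number has a prime factor at most its square root. The \emph{only if} direction is immediate: if $p$ is prime, then Theorem~\ref{THM:test} already guarantees \eqref{EQ:test} for all $n$ with $0 \le n \le p-1$, hence in particular on the shorter range $0 \le n \le \sqrt{p}$. So only the \emph{if} direction uses the sharpened hypothesis, and I would establish it by contraposition: assuming $p>1$ is composite, I would exhibit some $n$ with $0 \le n \le \sqrt{p}$ for which \eqref{EQ:test} fails.

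Accordingly, suppose $p$ is composite and let $\ell$ be its least prime factor. The one substantive step is the bound $\ell \le \sqrt{p}$. Writing $p = \ell \cdot (p/\ell)$, every prime factor of the cofactor $p/\ell$ is $\ge \ell$ by minimality of $\ell$, so $p/\ell \ge \ell$ and therefore $p \ge \ell^2$, i.e.\ $\ell \le \sqrt{p}$. Note also that $\ell \ge 2$, consistent with the congruence never failing at $n = 0$ or $n = 1$, since $\binom{p}{0} = 1$ and $\binom{p+1}{1} = p+1 \equiv 1 \pmod{p}$.

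Now I would apply Theorem~\ref{THM:LPF} with $m = p$: it identifies $\ell$ as the smallest natural number with $\binom{p+\ell}{\ell} \not\equiv 1 \pmod{p}$, so in particular $\binom{p+\ell}{\ell} \not\equiv 1 \pmod{p}$. Because $\ell$ is an integer with $\ell \le \sqrt{p}$, it satisfies $\ell \le \lfloor \sqrt{p} \rfloor$, so $n = \ell$ genuinely lies in the tested range and supplies the desired failure of \eqref{EQ:test}. This completes the contrapositive and hence the corollary. I anticipate no real obstacle: once Theorem~\ref{THM:LPF} is available the deduction is essentially one line, the only ingredient being the classical square-root bound on the least prime factor; the single point worth stating carefully is that an integer $\ell \le \sqrt{p}$ really does fall within the integer range $0 \le n \le \sqrt{p}$.
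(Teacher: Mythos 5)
Your proposal is correct and follows essentially the same route as the paper: the paper's proof is the one-line observation that $m>1$ is prime if and only if its least prime factor exceeds $\sqrt{m}$, combined with Theorem~\ref{THM:LPF} applied with $m=p$. You have simply spelled out the same argument in more detail (including the standard derivation of the bound $\ell\le\sqrt{p}$ for composite $p$), so there is nothing to add.
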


\begin{proof}
An integer $m>1$ is a prime if and only if its least prime factor $\ell$ exceeds
$\sqrt{m}.$ The corollary follows by setting $m=p$ in Theorem~\ref{THM:LPF}. \forceqed
\end{proof}

To see that {\em Corollary \ref{THM:short} is sharp in that the range for $n$ cannot be further shortened to} $0\le n \le \sqrt{p}-1$, let $q$ be any prime and set $p=q^2$. Then $p$ is not a prime, but the least-prime-factor test with $m=p$ and $\ell=q$ implies \eqref{EQ:test} when \mbox{$0\le n \le q-1$}.

\begin{problem}
Since the ``if'' part of Babbage's primality test is a consequence both of Me\v{s}trovi\'{c}'s theorem and of the least-prime-factor test, one may ask, {\em Is there a common generalization of Me\v{s}trovi\'{c}'s theorem and Theorem~\ref{THM:LPF}?}
(Note, though, that the modulus in the former is~$p,$ while that in the latter is~$m.$)
\end{problem}

Actually, the incongruence \eqref{EQ:LPF2} holds more generally 
if the {\em least} prime factor $\ell \mid m$ is replaced with {\em any} prime factor $p \mid m$. The following extension of the  least-prime-factor test is proven in Section~\ref{SEC:proof}. See also Sondow \cite[Part (a)]{sondow}.


\begin{theorem} \label{THM:pft}
$(i)$ Given a positive integer $m$ and a prime factor $p\mid m$, we have
\begin{equation} \label{EQ:primedivisor}
\binom{m+p}{p} \not\equiv 1 \pmod{m}.
\end{equation}
$(ii)$ If in addition $p^r\mid m$ but $p^{r+1}\nmid m$, where $r\ge1$, then
\begin{equation} \label{EQ:power}
\binom{m+p}{p} \equiv \frac{m}{p} + 1 \not\equiv 1 \pmod{p^r}.
\end{equation}
\end{theorem}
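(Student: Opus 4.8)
The plan is to establish part $(ii)$ first and then obtain part $(i)$ for free by taking $r$ to be the exact exponent of $p$ in $m$. The starting point is to peel off the one factor responsible for the $p$ in the denominator of the binomial coefficient. Writing $\binom{m+p}{p} = \frac{(m+1)(m+2)\cdots(m+p)}{p!}$ and matching the numerator factor $m+p$ against the factor $p$ in $p! = p\cdot(p-1)!$, I would record the integer identity
\[
\binom{m+p}{p}\,(p-1)! = \Bigl(\frac{m}{p}+1\Bigr)\prod_{j=1}^{p-1}(m+j).
\]
Here $\frac{m}{p}+1$ is a genuine integer because $p\mid m$, so both sides lie in $\mathbb{Z}$; clearing the denominator in advance is exactly what will make the later cancellation legitimate.

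The key step is to reduce this identity modulo $p^r$. Since $p^r\mid m$, every factor of the remaining product satisfies $m+j\equiv j\pmod{p^r}$, so $\prod_{j=1}^{p-1}(m+j)\equiv (p-1)!\pmod{p^r}$. Because $\gcd\bigl((p-1)!,\,p\bigr)=1$, the factorial $(p-1)!$ is invertible modulo $p^r$, and cancelling it from both sides of the reduced identity yields
\[
\binom{m+p}{p}\equiv \frac{m}{p}+1 \pmod{p^r},
\]
which is the first congruence of $(ii)$. For the second relation I would note that $\frac{m}{p}+1\equiv 1\pmod{p^r}$ would force $p^r\mid \frac{m}{p}$, i.e.\ $p^{r+1}\mid m$, contradicting the hypothesis $p^{r+1}\nmid m$; equivalently, the exponent of $p$ in $m/p$ equals $r-1<r$. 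This gives $\frac{m}{p}+1\not\equiv 1\pmod{p^r}$ and completes $(ii)$.

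Part $(i)$ then follows immediately: every prime factor $p\mid m$ has a well-defined exponent $r\ge 1$ with $p^r\mid m$ and $p^{r+1}\nmid m$, so $(ii)$ applies. Since $p^r\mid m$, a congruence $\binom{m+p}{p}\equiv 1\pmod m$ would imply $\binom{m+p}{p}\equiv 1\pmod{p^r}$, contradicting the incongruence just established; hence $\binom{m+p}{p}\not\equiv 1\pmod m$.

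I do not expect a serious analytic obstacle here: no appeal to Wolstenholme's or Kummer's theorems seems necessary, and the whole argument stays within elementary congruences. The one point that requires genuine care, and which I regard as the crux, is the bookkeeping around the lone factor of $p$ in the denominator. One must first pass to the honest integer identity by clearing $(p-1)!$ — the quotient $\prod_{j=1}^{p-1}(m+j)/(p-1)!$ need not be an integer on its own — and only then reduce modulo $p^r$ and invoke $\gcd((p-1)!,p)=1$ to cancel the factorial. A quick sanity check on a small composite (for instance $m=12$ with $p=2$ and with $p=3$) would confirm the normalizations and guard against an off-by-one slip in the argument of the binomial coefficient.
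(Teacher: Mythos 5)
Your argument is correct: the integer identity $\binom{m+p}{p}\,(p-1)! = \bigl(\tfrac{m}{p}+1\bigr)\prod_{j=1}^{p-1}(m+j)$ is valid, the reduction $m+j\equiv j\pmod{p^r}$ turns the product into $(p-1)!$, and cancelling the unit $(p-1)!$ modulo $p^r$ gives exactly the congruence in (ii); the deductions of the incongruence and of part (i) are also sound. The overall skeleton matches the paper's proof --- both hinge on peeling off the factor $\frac{m+p}{p}=\frac{m}{p}+1$ and showing that what remains, namely $\binom{m+p-1}{p-1}$, is congruent to $1$ --- but the mechanism differs. The paper introduces $g=\gcd((p-1)!,m)$ and $m_p=m/g$, uses B\'{e}zout's identity to prove $\binom{m}{k}\equiv 0\pmod{m_p}$ for $1\le k\le p-1$, and then applies Vandermonde's convolution, all modulo $m_p$; since $p\nmid g$, one has $p^r\mid m_p$ and the stated congruence follows. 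Your route avoids both B\'{e}zout and Vandermonde by working directly with the rising-factorial form and exploiting the invertibility of $(p-1)!$ modulo $p^r$. What you give up is strength of modulus: the paper actually proves $\binom{m+p}{p}\equiv\frac{m}{p}+1\pmod{m_p}$, which can be a much larger modulus than $p^r$ (e.g.\ it retains all prime-power factors of $m$ with primes $\ge p$), whereas your congruence holds only modulo $p^r$. Since the theorem as stated claims only the modulus $p^r$, your more elementary argument fully suffices.
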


Part ($i$) is clearly equivalent to the statement that {\em if $d>1$ divides $m$ and $\binom{m+d}{d} \equiv 1 \pmod{m}$, then $d$ is composite.}
As an example, for $m=260$ and $d=10$ we have
$$ \binom{m+d}{d} = \binom{270}{10}  = 479322759878148681 \equiv 1 \pmod{260}.$$
The sequence of integers $m>1,$ for which some integer $d$ (necessarily composite) satisfies 
\begin{equation} \label{EQ:equiv 1}
d>1 , \qquad d \mid m, \qquad  \binom{m+d}{d} \equiv 1 \pmod{m},
\end{equation}
begins \cite[Seq. A290040]{oeis}
$$
m=260, 1056, 1060, 3460, 3905, 4428, 5000, 5060, 5512, 5860, 6372, 6596,\dotso
$$
and the sequence of smallest such divisors $d$ is, respectively, \cite[Seq. A290041]{oeis}
\begin{equation} \label{EQ:d}
d=10, 264, 10, 10, 55, 18, 20, 10, 52, 10, 18, 34,\dotso.
\end{equation}

\begin{problem}
Does Theorem~\ref{THM:pft} extend to prime power factors, i.e., does \eqref{EQ:primedivisor} also hold when $p$ is replaced with $p^k$, where $p^k\mid m$ and $k>1$? In particular, in the sequence \eqref{EQ:d}, is any term $d$ a prime power?
\end{problem}

See \cite[Part (c)]{sondow}.

Babbage also claimed a necessary and sufficient condition for primality based on a {\em single} congruence. But he proved only necessity, so we call it a test for non-primality.

\begin{theorem}[{\bf Babbage's Non-Primality Test}] \label{THM:bab}
An integer $m\ge3$ is composite if
\begin{equation} \label{EQ:babcong}
\binom{2m-1}{m-1} \not\equiv 1 \pmod{m^2}.
\end{equation}
\end{theorem}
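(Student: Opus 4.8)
The plan is to prove the logically equivalent contrapositive: \emph{if $m\ge3$ is prime, then $\binom{2m-1}{m-1}\equiv1\pmod{m^2}$}. This is precisely the congruence discovered by Babbage, a weak (square rather than cube) form of Wolstenholme's theorem. Since it must cover $m=3$, I would prove it directly from scratch rather than quote Wolstenholme's theorem, which is only valid for primes $p\ge5$; the direct argument handles all odd primes uniformly. Throughout, write $p=m$ for the prime.

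First I would record the product formula
\begin{equation*}
(p-1)!\,\binom{2p-1}{p-1}=\prod_{k=1}^{p-1}(p+k),
\end{equation*}
which follows from $\binom{2p-1}{p-1}=(2p-1)!/\bigl((p-1)!\,p!\bigr)$ upon writing $(2p-1)!/p!=(p+1)(p+2)\cdots(2p-1)$. Treating $p$ as the variable and reducing modulo $p^2$ annihilates every term of degree $\ge2$ in $p$, so
\begin{equation*}
\prod_{k=1}^{p-1}(p+k)\equiv e_{p-1}+p\,e_{p-2}\pmod{p^2},
\end{equation*}
where $e_j=e_j(1,2,\dots,p-1)$ is the $j$-th elementary symmetric polynomial in $1,\dots,p-1$. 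Here $e_{p-1}=(p-1)!$ and $e_{p-2}=(p-1)!\sum_{k=1}^{p-1}k^{-1}$, the latter being the sum of all products obtained by omitting a single factor. Because $p\nmid(p-1)!$, the factorial is invertible modulo $p^2$; dividing through yields
\begin{equation*}
\binom{2p-1}{p-1}\equiv 1+p\sum_{k=1}^{p-1}k^{-1}\pmod{p^2},
\end{equation*}
where the inverse-harmonic sum need only be understood modulo $p$, since it is multiplied by $p$.

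The decisive remaining step is to show that this sum vanishes: $\sum_{k=1}^{p-1}k^{-1}\equiv0\pmod p$. I would argue that, as $k$ runs over the nonzero residues, so does $k^{-1}$, whence the sum equals $\sum_{k=1}^{p-1}k=\tfrac{p(p-1)}{2}$, which is $\equiv0\pmod p$ \emph{precisely because $p$ is odd}. This is exactly where the hypothesis $m\ge3$ (excluding $p=2$) is essential; indeed $\binom{3}{1}=3\not\equiv1\pmod4$, so the statement genuinely fails at $2$. Substituting back gives $\binom{2p-1}{p-1}\equiv1\pmod{p^2}$, establishing the contrapositive and hence the theorem. The main obstacle is really just this harmonic-sum identity together with the bookkeeping of the symmetric-function expansion and the invertibility of $(p-1)!$ modulo $p^2$; everything else is routine, and it is the parity of $p$ that makes the linear-in-$p$ term disappear.
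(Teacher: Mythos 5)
Your proposal is correct, but it takes a genuinely different route from the paper. The paper also argues by contraposition, assuming $m$ prime, but then invokes the central case of Vandermonde's convolution, $\binom{2m}{m}=\sum_{n=0}^{m}\binom{m}{n}^2$, together with the fact that $m\mid\binom{m}{n}$ for $1\le n\le m-1$, to get $2\binom{2m-1}{m-1}=\binom{2m}{m}\equiv 1^2+1^2=2\pmod{m^2}$ in one stroke; it then cancels the factor $2$, which is legitimate because $m\ge3$ is odd. You instead use the product formula $(p-1)!\binom{2p-1}{p-1}=\prod_{k=1}^{p-1}(p+k)$, expand via elementary symmetric polynomials modulo $p^2$, and reduce everything to the classical fact that the inverse-harmonic sum $\sum_{k=1}^{p-1}k^{-1}$ vanishes modulo an odd prime $p$. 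Both arguments are sound, and both use the oddness of $m$ at exactly one pivotal point (the paper to divide by $2$, you to make $p(p-1)/2\equiv0\pmod p$). The paper's proof is shorter and reuses machinery (Vandermonde's convolution and the identity $\binom{a}{b}=\frac{a}{b}\binom{a-1}{b-1}$) already deployed in its other proofs, and is close to Babbage's original argument; your harmonic-sum approach is heavier on bookkeeping but is the natural gateway to the stronger Wolstenholme congruence modulo $p^3$, where the same expansion carried one degree further reduces the problem to $\sum k^{-1}\equiv0\pmod{p^2}$ and $\sum k^{-2}\equiv0\pmod p$. One small point of care that you handled correctly: the coefficient $e_{p-2}$ need only be evaluated modulo $p$ since it is multiplied by $p$, and $(p-1)!$ is invertible modulo $p^2$, so the division is valid.
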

Our version of his proof is given in Section~\ref{SEC:non-primality}.

Not only did Babbage not prove the claimed converse, but in fact it is false. Indeed,
{\em the numbers \mbox{$m_1=p_1^2=283686649$} and \mbox{$m_2=p_2^2=4514260853041$} are composite but do not satisfy} \eqref{EQ:babcong}, where \mbox{$p_1=16843$} and \mbox{$p_2=2124679$} are primes.
 
Here $p_1$ (indicated by Selfridge and Pollack in $1964$) and $p_2$ (discovered by Crandall, Ernvall and Mets\"{a}nkyl\"{a} in $1993$) are {\em Wolstenholme primes}, so called by Mcintosh \cite{mcintosh} because, while {\bf Wolstenholme's theorem} \cite{wolstenholme} (see \cite{granville,mestrovic,tw} and \cite[p.~73]{moll}) of $1862$ guarantees that {\em every prime $p\ge5$ satisfies}
\begin{equation} \label{EQ:Wolstenholme}
\binom{2p-1}{p-1} \equiv 1 \pmod{p^3},
\end{equation}
in fact $p_1$ and $p_2$ satisfy the congruence in \eqref{EQ:Wolstenholme} modulo $p^4$, not just $p^3$ (see Guy \cite[p.~131]{guy} and Ribenboim \cite[p.~23]{ribenboim}).

Note that \eqref{EQ:Wolstenholme} strengthens Babbage's non-primality test, as Theorem~\ref{THM:bab} is equivalent to the statement that {\em the congruence in \eqref{EQ:Wolstenholme} holds modulo $p^2$ for any prime} $p\ge3$. 

In their solutions to a problem by Segal in the \textit{Monthly}, Brinkmann \cite{sb} and Johnson \cite{sj} made Babbage's and Wolstenholme's theorems more precise by showing that {\em every prime $p\ge5$ satisfies the congruences}
$$\binom{2p-1}{p-1} \equiv 1-\frac23 p^3B_{p-3} \equiv \binom{2p^2-1}{p^2-1} \pmod{p^4},$$
where $B_k$ denotes the $k$th \textit{Bernoulli number}, a rational number. (See also Gardiner \cite{gardiner} and Mcintosh \cite{mcintosh}.) Thus, {\em a prime \mbox{$p\ge5$} is a Wolstenholme prime if and only if} $B_{p-3} \equiv 0 \pmod{p}$. (The congruence means that $p$~divides the numerator of $B_{p-3}$.)
In that case, the square of that prime, say \mbox{$m=p^2$}, is composite but must satisfy
$$\binom{2m-1}{m-1} \equiv 1\pmod{m^2},$$ thereby providing a counterexample to the converse of Babbage's non-primality test.

Johnson \cite{sj} commented that  ``interest in [Wolstenholme primes] arises from the fact that in $1857$, Kummer proved that the first case of [Fermat's Last Theorem] is true for all prime exponents $p$ such that $p\nmid B_{p-3}$.''

We have seen that  the converse of Babbage's non-primality test is false.
The converse of Wolstenholme's theorem is the statement that {\em if $p\ge5$ is composite, then \eqref{EQ:Wolstenholme} does not hold.}
It is not known whether this is generally true. 
A proof that it is true for {\em even} positive integers was outlined by Trevisan and Weber \cite{tw} in $2001$.
In Section \ref{SEC:non-primality}, we fill in some details omitted from their argument and extend it to prove the following stronger result.

\begin{theorem}[Converse of Babbage's Non-Primality Test for Even Numbers] \label{THM:even}
If a positive integer $m$ is even, then
\begin{equation} \label{EQ:even}
\binom{2m-1}{m-1} \not\equiv1\pmod{m^2}.
\end{equation}
\end{theorem}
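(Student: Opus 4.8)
The plan is to reduce everything to a single congruence modulo $4$. Since $m$ is even, $4\mid m^2$, so any solution of $\binom{2m-1}{m-1}\equiv1\pmod{m^2}$ would in particular satisfy $\binom{2m-1}{m-1}\equiv1\pmod4$; hence it suffices to prove the single congruence
\[
\binom{2m-1}{m-1}\not\equiv1\pmod4 ,
\]
which immediately implies \eqref{EQ:even}. To get started I would record the exact power of $2$ dividing the left-hand side. From $\binom{2m-1}{m-1}=\tfrac12\binom{2m}{m}$ together with Kummer's theorem (equivalently Legendre's formula), the exponent of $2$ in $\binom{2m}{m}$ is the number of carries when $m$ is added to itself in base $2$, which equals $s_2(m)$, the number of $1$'s in the binary expansion of $m$. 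Therefore the $2$-adic valuation of $\binom{2m-1}{m-1}$ is $s_2(m)-1$.

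When $m$ is \emph{not} a power of $2$ this already finishes the job: then $s_2(m)\ge2$, so $\binom{2m-1}{m-1}$ is even, hence $\not\equiv1\pmod2$ and a fortiori $\not\equiv1\pmod4$. The remaining, and genuinely harder, case is $m=2^a$ with $a\ge1$, where $s_2(m)=1$ makes $\binom{2m-1}{m-1}$ odd and the valuation argument says nothing; here I must compute the residue modulo $4$ precisely and show it is $3$.

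For $m=2^a$ the idea is to build a recursion from the identity $\binom{2m-1}{m-1}=\prod_{k=1}^{m-1}\frac{m+k}{k}$. Separating the product by the parity of $k$, the even indices $k=2j$ give $\prod_{j=1}^{m/2-1}\frac{(m/2)+j}{j}=\binom{m-1}{m/2-1}$, which is the same quantity with $m$ replaced by $m/2$, while the odd indices give a product $U(m)$ of ratios of odd integers, hence a $2$-adic unit. Writing $f(a)=\binom{2^{a+1}-1}{2^a-1}$, this reads $f(a)=U(2^a)\,f(a-1)$ with $f(0)=1$. For $a\ge2$ one has $2^a\equiv0\pmod4$, so each factor $\frac{2^a+k}{k}=1+\frac{2^a}{k}$ (with $k$ odd, hence invertible mod $4$) is $\equiv1\pmod4$, giving $U(2^a)\equiv1\pmod4$; as $f(a-1)$ is an odd integer, the recursion yields $f(a)\equiv f(a-1)\pmod4$. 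Induction from $f(1)=\binom31=3$ then gives $\binom{2m-1}{m-1}=f(a)\equiv3\pmod4$ for all $a\ge1$, so $\binom{2m-1}{m-1}\not\equiv1\pmod4$, as required.

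I expect the power-of-$2$ case to be the main obstacle, since the parity bound collapses exactly there. The crucial mechanism is that the even-indexed half of the product $\prod_{k}\frac{m+k}{k}$ regenerates the very same expression at $m/2$, converting the mod-$4$ question into a short induction; the one step deserving care is the verification that the odd-indexed factor $U(2^a)$ is $\equiv1\pmod4$, which rests on $4\mid 2^a$ for $a\ge2$ and on odd denominators being units modulo $4$. This is also the place where the sketch of Trevisan and Weber requires the omitted details to be supplied.
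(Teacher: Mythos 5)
Your proof is correct, and the reduction to a single congruence modulo $4$ together with the treatment of the non-power-of-$2$ case (Kummer's theorem showing $v_2\bigl(\binom{2m-1}{m-1}\bigr)=s_2(m)-1\ge1$) is exactly what the paper does. Where you diverge is the case $m=2^a$: the paper writes $2\binom{2m-1}{m-1}=\binom{2m}{m}=\sum_n\binom{m}{n}^2$, isolates the terms $n=0,m$ and the middle term, and uses a separate valuation lemma ($v_2(\binom{m}{n})=v_2(m)-v_2(n)$ for $n\le 2^{v_2(m)}$) to show the middle term contributes $2$ and the rest contribute $0$ modulo $4$, giving residue $3$. You instead split the product $\prod_{k=1}^{m-1}\frac{m+k}{k}$ by parity of $k$, observe that the even-indexed factors reproduce $\binom{m-1}{m/2-1}$, and show the odd-indexed factor is $\equiv1\pmod 4$ in $\mathbb{Z}_{(2)}$ once $a\ge2$, reducing everything to the base case $\binom{3}{1}=3$ by induction. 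Both arrive at $\binom{2m-1}{m-1}\equiv3\pmod4$ for $m$ a power of $2$. Your route avoids the paper's Lemma on $2$-adic valuations of $\binom{m}{n}$ entirely, at the modest cost of working with congruences among $2$-integral rationals (odd denominators are units modulo $4$, and a congruence in $4\mathbb{Z}_{(2)}$ between integers descends to one modulo $4$ in $\mathbb{Z}$ --- worth saying explicitly); you are also right to start the induction at $f(1)=3$ rather than $f(0)=1$, since $U(2)=3\not\equiv1\pmod4$. The paper's sum-of-squares identity has the side benefit of being the same identity used in its proof of Babbage's non-primality test, which keeps the two arguments parallel; your recursion is self-contained and arguably more elementary.
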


\section{Proofs of the least-prime-factor test and its extension} \label{SEC:proof}
We prove Theorems~\ref{THM:LPF} and \ref{THM:pft}. The arguments
use only mathematics available in Babbage's time.

\begin{proof}[Theorem~\ref{THM:LPF}]
As $\ell$ is the smallest prime factor of $m,$ if $0<k < \ell$ then $k!$ and $m$ are coprime. In that case, {\bf B\'{e}zout's identity}
(proven in $1624$ by Bachet in a book with the charming title {\em Pleasant and Delectable Problems} \cite[p. 18, Proposition XVIII]{bachet}\textemdash see \cite[Section 4.3]{chabert}) gives integers $a$ and $b$ with 
\mbox{$ak!+bm=1$}. Multiplying B\'{e}zout's equation by the number \mbox{$\binom{m}{k} = m(m-1)\dotsb(m-k+1)/k!$} yields
$$ a m(m-1)\dotsb(m-k+1) + bm\binom{m}{k} = \binom{m}{k},$$
so $\binom{m}{k}  \equiv0\!\pmod{m}$ 
if $1 \le k \le \ell-1.$
Now, for \mbox{$n=0,1,\dotsc, \ell-1$}, {\bf Vandermonde's convolution} \cite{vandermonde} (see \cite[p.~164]{moll}) of $1772$ gives
\begin{equation*} \label{EQ:Chu} 
\binom{m+n}{n} = \sum_{k=0}^n \binom{m}{k} \binom{n}{n-k}
\equiv \binom{m}{0} \binom{n}{n}  \pmod{m}.
\end{equation*}
(To see the equality, equate the coefficients of $x^n$ in the expansions of \mbox{$(1+x)^{m+n}$} and $(1+x)^m(1+x)^n.$)
Thus, we arrive at the congruences
\begin{equation} \label{EQ:step1}
\binom{m+n}{n}\equiv 1 \pmod{m} \qquad (0 \le n \le \ell-1).
\end{equation}

On the other hand, from the identity
\begin{equation} \label{EQ:identity}
\binom{a}{b} = 
 \frac{a}{b} \binom{a-1}{b-1}
\end{equation}
(to prove it, use factorials), 
the congruence \eqref{EQ:step1} for $n=\ell-1$, the integrality of \mbox{$\frac{m+\ell}{\ell}=\frac{m}{\ell}+1$}, 
and the inequality $\ell>1$ (as $\ell$ is a prime), 
we deduce that
\begin{equation*} \label{EQ:lpftest}
 \binom{m+\ell}{\ell} = \frac{m+\ell}{\ell}\binom{m+\ell-1}{\ell-1}\equiv \frac{m}{\ell}+1  \not\equiv 1 \pmod{m}.
\end{equation*}
 Together with \eqref{EQ:step1}, this implies the least-prime-factor test. \forceqed
\end{proof}

\begin{proof}[Theorem~\ref{THM:pft}]
It suffices to prove (ii).
Set
$$g\DefEqDisp\gcd((p-1)!,m)  \qquad  \text{and}\qquad m_p\DefEqDisp\frac{m}{g}.$$
Note that
\begin{equation} \label{EQ:implies}
p  \ \text{prime} \implies p\nmid g \implies p^r\mid m_p,
\end{equation}
since $p^r\mid m$. B\'{e}zout's identity
gives integers $a$ and $b$ with
\mbox{$a(p-1)!+bm=g$}. When $0<k<p$, multiplying B\'{e}zout's equation by $\binom{m}{k}$
yields
$$ a m(m-1)\dotsb(m-k+1)\frac{(p-1)!}{k!} + bm\binom{m}{k} = g\binom{m}{k}$$
with $(p-1)!/k!$ an integer, so $g\binom{m}{k}  \equiv0\!\pmod{m}$. Dividing by $g$ gives
$$\binom{m}{k}  \equiv0\!\pmod{m_p}\quad (1\le k \le p-1).$$
Combining this with \eqref{EQ:identity} and Vandermonde's convolution, we get
\begin{align} \label{EQ:Chu2} 
\begin{split}
 \binom{m+p}{p} = \frac{m+p}{p}\binom{m+p-1}{p-1} &= \frac{m+p}{p}\sum_{k=0}^{p-1} \binom{m}{k} \binom{p-1}{p-1-k}\\
&\equiv  \frac{m}{p}+1 \pmod{m_p}.
\end{split}
\end{align}
As $p^{r+1}\nmid m$, we have $p^r\nmid \frac{m}{p}$. Now, \eqref{EQ:implies} and \eqref{EQ:Chu2} imply \eqref{EQ:power}, as required. \forceqed
\end{proof}

\section{Proofs of Babbage's non-primality test and its converse for even numbers} \label{SEC:non-primality}

The following proof is close to the one Babbage gave.

\begin{proof}[Theorem \ref{THM:bab}]
Suppose on the contrary that $m$ is prime. If we have $1 \le n \le m-1$, then $m$ divides the numerator of $\binom{m}{n} = m!/n!(m-n)!$ but not the denominator, so
\mbox{$\binom{m}{n}\equiv 0 \pmod{m}$}.
Thus, by 
\eqref{EQ:identity} and a famous case of Vandermonde's convolution,
\begin{equation} \label{EQ:special}
2\binom{2m-1}{m-1} =\binom{2m}{m} = \sum_{n=0}^m\binom{m}{n}^2 \equiv 1^2+1^2 \equiv 2 \pmod{m^2}.
\end{equation}
But as $m\ge3$ is odd, \eqref{EQ:special} contradicts
\eqref{EQ:babcong}. Therefore, $m$ is composite. \forceqed
 \end{proof}

Before giving the proof of Theorem~\ref{THM:even}, we establish two lemmas.
For any positive integer $k,$ let $2^{v(k)}$ denote the highest power of $2$ that divides $k.$

\begin{lemma} \label{LEM:power}
If $m\ge n\ge1$ are integers satisfying $n\le2^{v(m)},$ then the formula $v(\binom{m}{n}) = v(m)-v(n)$ holds.
\end{lemma}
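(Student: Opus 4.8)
The plan is to reduce the statement to the oddness of a single binomial coefficient and then compute its $2$-adic valuation directly. Throughout, write $v$ for the $2$-adic valuation, so that $v$ is completely additive: $v(xy)=v(x)+v(y)$ and $v(x/y)=v(x)-v(y)$ for nonzero rationals $x,y$. Applying the identity \eqref{EQ:identity} with $a=m$ and $b=n$ and taking valuations gives
\[
v\!\left(\binom{m}{n}\right)=v(m)-v(n)+v\!\left(\binom{m-1}{n-1}\right).
\]
Hence the lemma is equivalent to the claim that $\binom{m-1}{n-1}$ is \emph{odd}, i.e. that $v\bigl(\binom{m-1}{n-1}\bigr)=0$, under the hypothesis $n\le 2^{v(m)}$.

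To prove that claim I would expand the binomial coefficient as a product,
\[
\binom{m-1}{n-1}=\prod_{j=1}^{n-1}\frac{m-j}{j},
\]
and show that every factor has valuation $0$. For each index $j$ in the range $1\le j\le n-1$ the hypothesis gives $j\le n-1\le 2^{v(m)}-1<2^{v(m)}$, and since $2^{v(j)}\le j$ this forces $v(j)<v(m)$. Because the two valuations differ, the non-Archimedean (ultrametric) property of $v$ yields $v(m-j)=\min\{v(m),v(j)\}=v(j)$, so that $v\bigl((m-j)/j\bigr)=v(m-j)-v(j)=0$. Summing over $j$ gives $v\bigl(\binom{m-1}{n-1}\bigr)=0$, and combining with the displayed identity completes the proof.

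The only real point to get right is the inequality $v(j)<v(m)$ for every $j\le n-1$: this is exactly what the hypothesis $n\le 2^{v(m)}$ buys, and it is what makes the valuation of the difference $m-j$ collapse to $v(j)$. Equivalently, one could note that $2^{v(m)}\mid m$ forces the lowest $v(m)$ binary digits of $m-1$ to be all $1$'s, so that the binary support of $n-1<2^{v(m)}$ is contained in that of $m-1$, whence $\binom{m-1}{n-1}$ is odd by Kummer's theorem on carries; but the direct valuation computation above is self-contained and needs no external result. I anticipate no serious obstacle here, only the mild care required at the boundary case $n=2^{v(m)}$, where $n-1=2^{v(m)}-1$ and the strict inequality $v(j)<v(m)$ must still be verified for all $j\le n-1$.
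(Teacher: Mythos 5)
Your proof is correct and is essentially the paper's own argument in a lightly repackaged form: both hinge on the observation that $v(m-j)=v(j)$ whenever $0<j<2^{v(m)}$ (you get it from the ultrametric inequality, the paper by writing $j=2^tj'$), applied factor by factor to the quotient of products defining the binomial coefficient. Peeling off the factor $m/n$ via \eqref{EQ:identity} and phrasing the remainder as the oddness of $\binom{m-1}{n-1}$ is a tidy but inessential reorganization of the same computation.
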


\begin{proof}
Let $m=2^rm'$ with $m'$ odd. Note that $v(2^rm'-k) = v(k)$ if \mbox{$0<k < 2^r$}. 
({\em Proof.} Write $k =2^tk',$ where $0\le t = v(k) \le r-1$ and $k'$ is odd. Then \mbox{$2^{r-t}m'-k'$} is also odd, so $v(2^rm'-k) =v(2^t(2^{r-t}m'-k'))=t=v(k).$)
The logarithmic formula \mbox{$v(ab)=v(a)+v(b)$} then implies that when $1 \le n\le2^r$ the exponent of the highest power of $2$ that divides the product
$$n!\binom{m}{n}= 2^rm'(2^rm'-1)(2^rm'-2)\dotsb(2^rm'-(n-1))$$
is
$v(n!)+v(\binom{m}{n}) = r +v(1\cdot2\dotsb(n-1))$, so $v(\binom{m}{n}) = r-v(n)$. As \mbox{$r=v(m)$}, this proves the desired formula.  \forceqed
\end{proof}

Lemma \ref{LEM:power} is sharp in that the hypothesis $n\le2^{v(m)}$ cannot be replaced with the weaker hypothesis $v(n) \le v(m).$ For example, $v(\binom{10}{6}) = v(210) = 1$, but $v(10)-v(6) =0.$

\begin{lemma} \label{LEM:powerof2}
A binomial coefficient $\binom{2m-1}{m-1}$ is odd if and only if $m=2^r$ for some $r\ge0.$
\end{lemma}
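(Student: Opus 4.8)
The plan is to reduce the parity question to the $2$-adic valuation of the central binomial coefficient $\binom{2m}{m}$ and then compute that valuation explicitly. The key is the identity $2\binom{2m-1}{m-1}=\binom{2m}{m}$ already recorded in \eqref{EQ:special}, which in the notation of Lemma~\ref{LEM:power} gives
$$v\!\left(\binom{2m-1}{m-1}\right) = v\!\left(\binom{2m}{m}\right) - 1.$$
Hence $\binom{2m-1}{m-1}$ is odd if and only if $v(\binom{2m}{m})=1$, and the whole lemma becomes the assertion that $v(\binom{2m}{m})=1$ exactly when $m$ is a power of~$2$.

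First I would dispatch the ``if'' direction using Lemma~\ref{LEM:power}, which is tailor-made for it. Applying that lemma to $\binom{2m}{m}$ (so that its upper argument is $2m$ and its lower argument is $m$), the hypothesis $n\le 2^{v(\cdot)}$ reads $m\le 2^{v(2m)}=2^{\,v(m)+1}$; writing $m=2^{v(m)}m'$ with $m'$ odd shows this holds precisely when $m'=1$, that is, exactly when $m=2^{v(m)}$ is a power of~$2$. In that case the lemma yields $v(\binom{2m}{m})=v(2m)-v(m)=1$, so $\binom{2m-1}{m-1}$ is odd.

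For the converse I would compute $v(\binom{2m}{m})$ outright, since Lemma~\ref{LEM:power} no longer applies once $m$ is not a power of~$2$. The efficient tool is the base-$2$ case of Legendre's formula $v(k!)=k-s(k)$, where $s(k)$ denotes the number of $1$'s in the binary expansion of $k$ (equivalently, Kummer's count of carries when adding $m$ to itself in base~$2$). Using $s(2m)=s(m)$, this gives
$$v\!\left(\binom{2m}{m}\right) = v\bigl((2m)!\bigr) - 2\,v(m!) = 2\,s(m) - s(2m) = s(m).$$
Therefore $v(\binom{2m-1}{m-1})=s(m)-1$, which vanishes if and only if $s(m)=1$, i.e. if and only if $m$ is a power of~$2$. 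This formula in fact settles both directions at once.

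The main obstacle is the converse: an argument resting solely on Lemma~\ref{LEM:power} stalls because the hypothesis $n\le 2^{v(m)}$ fails for exactly the non-powers of~$2$ that the converse concerns, forcing one to count the valuation of $\binom{2m}{m}$ by a global method (Legendre or Kummer) rather than by the local factorial cancellation of the lemma. Once one commits to $v(\binom{2m}{m})=s(m)$, the only point requiring care is the elementary observation that multiplication by~$2$ merely shifts binary digits and so preserves the digit count, $s(2m)=s(m)$.
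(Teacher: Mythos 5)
Your proof is correct and follows essentially the same route as the paper: both reduce the question via $2\binom{2m-1}{m-1}=\binom{2m}{m}$ to showing that the $2$-adic valuation of $\binom{2m}{m}$ equals the number of ones in the binary expansion of $m$, which is $1$ exactly when $m$ is a power of $2$. The paper obtains this count from Kummer's carry-counting theorem, while you use the equivalent Legendre formula $v(k!)=k-s(k)$; your separate Lemma~\ref{LEM:power} argument for the ``if'' direction is valid but becomes redundant once the digit-sum formula is in hand.
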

\begin{proof} 
{\bf Kummer's theorem} \cite{kummer} (see \cite[p.~78]{moll} or \cite{pomerance}) for the prime~$2$ states that $v(\binom{a+b}{a})$ equals 
the number of carries when adding $a$ and $b$ in base~$2$ arithmetic. 
Hence $v(\binom{m+m}{m})$ is the number of ones in the binary expansion of $m$, and so \mbox{$v(\binom{2m}{m})=1$} if and only if $m=2^r$ for some $r\ge0$. 
As $\binom{2m}{m}  = 2 \binom{2m-1}{m-1}$ by \eqref{EQ:identity}, we are done.  \forceqed
\end{proof}

We can now prove the converse of Babbage's non-primality test for even numbers.

\begin{proof}[Theorem~\ref{THM:even}] 
For $m\ge2$ not a power of $2,$ Lemma~\ref{LEM:powerof2} implies that $\binom{2m-1}{m-1}$ is even, so $\binom{2m-1}{m-1}$ is congruent modulo~$4$ to either $0$ or $2$.
For $m\ge2$ a power of $2$, say $m=2^r$, the equalities in \eqref{EQ:special} and the symmetry $\binom{m}{n} =\binom{m}{m-n}$ yield
$$\binom{2m-1}{m-1} = 1 + \frac12\binom{2^r}{2^{r-1}}^2+ \sum_{k=1}^{2^{r-1}-1} \binom{2^r}{k}^2, $$
and Lemma \ref{LEM:power} implies that $\frac12\binom{2^r}{2^{r-1}}^2 \equiv2\!\!\pmod{4}$ 
and that $ \binom{2^r}{k}^2\equiv0\!\!\pmod{4}$ when \mbox{$0<k<2^{r-1}$}; thus, by addition $\binom{2m-1}{m-1} \equiv 3\!\! \pmod{4}$.
Hence for all $m\ge2$ we have $\binom{2m-1}{m-1} \not\equiv 1\!\! \pmod{4}$. Now as $4$ divides $m^2$ when $m$ is even, \eqref{EQ:even} holds a fortiori. 
 This completes the proof. \forceqed
\end{proof}


\printindex

\end{document}